\documentclass[12pt,a4paper]{article}

\usepackage[margin=1in]{geometry}
\usepackage{amsmath,amssymb,amsthm}
\usepackage{enumitem}
\usepackage{hyperref}
\usepackage{tikz}

\newtheorem{proposition}{Proposition}
\newtheorem{corollary}{Corollary}
\newtheorem{theorem}{Theorem}

\newtheorem{remark}{Remark}
\newtheorem{lemma}{Lemma}

\providecommand{\keywords}[1]{\medskip\noindent\textbf{Keywords:} #1}
\usepackage{xcolor}
\usepackage{comment}
\newcommand{\red}{\color{red}}

\newcommand{\lam}{\lambda_{\max}}
\newcommand{\RT}{R_T}

\title{Spectral Monotonicity under Leaf Attachment and Limiting Behavior in Discrete Einstein Trees}
\author{
Shuliang Bai\thanks{Beijing Yanqi Lake Institute of Mathematical Sciences and Applications, China. Email: \texttt{baishuliang@bimsa.cn}}
\and
Haoxuan Cheng\thanks{School of Mathematical Sciences, Fudan University, Shanghai 200433, China. Email: \texttt{hxcheng25@m.fudan.edu.cn}, \quad \textit{Corresponding author}}
\and
Bobo Hua\thanks{School of Mathematical Sciences, LMNS, Fudan University, Shanghai 200433, China. Email: \texttt{bobohua@fudan.edu.cn}}
}

\date{\today}

\begin{document}
\maketitle
\begin{abstract}
Let $R_T$ be the Ricci matrix of a finite tree $T$ introduced in \cite{BaiChengHua2026}, the largest eigenvalue $\lambda_{\max}(R_T)$ determines the sign of a discrete Einstein metric curvature on the tree. This paper investigates the asymptotic behavior of the sequence $\lambda_k = \lambda_{\max}(R_{T_k})$ obtained by repeatedly adding pendant edges at a fixed vertex. We prove that $\lambda_k$ converges to a limit $\lambda_\infty$ that depends only on the local branch data of $T$, and establish a first-order asymptotic expansion:
\[
\lambda_k = \lambda_\infty + \frac{\alpha}{d+k} + O\!\left(\frac{1}{(d+k)^2}\right),
\]
where $d$ is the degree of the original vertex, and the coefficient $\alpha$ is given by a spectral projection. As a corollary, when $\alpha \neq 0$, $\lambda_k$ is eventually strictly monotonic (increasing or decreasing). This theory reveals the fine influence of local leaf addition on the global spectrum.
\end{abstract}

\keywords{Discrete Einstein metric; Tree; Spectral perturbation; Ricci matrix; Monotonicity}

\section{Introduction}

Let $T=(V,E)$ be a finite tree. We \cite{BaiChengHua2026} introduced the \emph{Ricci matrix} $R_T\in\mathbb{R}^{E\times E}$ of a tree, defined by
\[
(R_T)_{e,e'} =
\begin{cases}
-\bigl(\frac{1}{d_x}+\frac{1}{d_y}\bigr), & e=e'=\{x,y\},\\[4pt]
\frac{1}{d_z}, & e\neq e',\ e\cap e'=\{z\},\\[4pt]
0, & e\cap e'=\varnothing,
\end{cases}
\]
where $d_v$ is the degree of vertex $v$. A key property of this matrix is that a discrete Einstein metric on a tree (i.e., edge weights with constant Lin--Lu--Yau curvature \cite{LinLuYau2011}, in the sense of optimal-transport Ricci curvature on graphs \cite{Ollivier2009}) corresponds exactly to the Perron eigenvector of $R_T$, with the Einstein curvature given by $\kappa = -\lambda_{\max}(R_T)$.

Consequently, the sign of $\lambda_{\max}(R_T)$ entirely determines the sign of the curvature, and the study of its magnitude and variation is of significant geometric and dynamical interest.

We\cite{BaiChengHua2026}  proved that 
$\lambda_{\max}(R_T)\le 0$ implies $T$ must be a caterpillar (i.e., a tree whose 
leaves can be removed to leave a path); they also established monotonicity results for leaf 
attachment: attaching a leaf at a vertex of degree $\le 2$ does not decrease 
$\lambda_{\max}$; when $\lambda_{\max}(R_T)<0$, leaf attachment at any vertex 
strictly increases $\lambda_{\max}$.
Building on this foundation, Cheng\cite{ChengHua2026} provided a complete 
classification of trees with $\lambda_{\max}(R_T)\le 0$, proving that for spine length 
$m\ge 12$ negativity occurs exactly for endpoint families $C_m(a,b)$ with 
$1\le a,b\le 3$, $(a,b)\neq(3,3)$, and giving explicit finite lists for shorter 
spines. Their classification also confirmed that the only infinite zero-curvature 
family is precisely $(3,0,\ldots,0,3)$ discovered in \cite{BaiChengHua2026}. These phenomenon naturally raises the following question:

\begin{center}
\emph{When repeatedly adding pendant edges at a fixed vertex, how does $\lambda_{\max}(R_T)$ change? Does the limit exist? When is it eventually monotonic?}
\end{center}

This paper provides a systematic answer to this question. Consider a fixed tree $T$ and a vertex $v$ on it. Let $T_k$ be the tree obtained by adding $k$ additional pendant edges at $v$. Set $\lambda_k = \lambda_{\max}(R_{T_k})$. By reducing $R_{T_k}$ on an orbit-equivariant subspace, we express $\lambda_k$ as the largest eigenvalue of a family of finite-dimensional matrices
\[
Q_k = Q_\infty + \frac{1}{d+k}B,
\]
where $d=d_T(v)$, $Q_\infty$ is a block upper-triangular matrix whose diagonal blocks consist of the Dirichlet truncation matrices $A_j$ from each branch after removing $v$ and a zero scalar block. See Section~\ref{sec:limit} for accurate definitions of $A_j$ and $B$. From this, we obtain the following results:

\begin{itemize}
\item \textbf{Limit Formula} (Proposition~\ref{prop:limit}): The sequence $\lambda_k$ converges to a limit
\[
\lim_{k\to\infty}\lambda_k = \lambda_\infty := \max\bigl(0,\lambda_{\max}(A_1),\ldots,\lambda_{\max}(A_d)\bigr).
\]

\item \textbf{First-Order Asymptotic Expansion} (Proposition~\ref{prop:asym}):
 When the limit $\lambda_\infty$ is simple, the convergence rate is governed by
\[
\lambda_k = \lambda_\infty + \frac{\alpha}{d+k} + O\!\left(\frac{1}{(d+k)^2}\right),
\]
where $\alpha$ is a constant determined by the local geometry of $T$ at $v$. 
In particular, the sign of $\alpha$ controls whether $\lambda_k$ approaches $\lambda_\infty$ 
from above or below, and guarantees eventual monotonicity.

\item \textbf{Eventual Tail Monotonicity} (Theorem~\ref{thm:tail-degenerate}):
If $\alpha \neq 0$, then for sufficiently large $k$, the sequence $\{\lambda_k\}$ is strictly monotonic: strictly decreasing towards $\lambda_\infty$ if $\alpha>0$, and strictly increasing towards $\lambda_\infty$ if $\alpha<0$.
\end{itemize}


The paper is organized as follows: Section~\ref{sec:notation} reviews the Ricci matrix and basic notation; Section~\ref{sec:one-step} derives the Rayleigh difference formula for one-step leaf addition and the $\rho_v$-sharp criterion, providing a coarse degree-dependent threshold; Section~\ref{sec:limit} establishes the block structure of $Q_k$ and proves the limit formula; Section~\ref{sec:monotonicity} uses analytic perturbation theory to prove the first-order expansion and eventual tail monotonicity; Section~\ref{sec:example} presents  numerical examples to demonstrate the theory.

\subsection*{Relation to Existing Work}
Here we briefly compare the leaf-attachment 
behavior of the Ricci matrix with that of other classical graph matrices.

For the adjacency matrix $A(G)$, adding a pendant vertex at a vertex $v$ strictly increases the spectral radius 
regardless of $\deg(v)$; this follows from the Perron--Frobenius theorem and the interlacing inequality 
(see, e.g., \cite[Chapter~2]{Stevanovic2014}). For the graph Laplacian $L(G)$, the largest eigenvalue is 
bounded by the maximum degree, and attaching a leaf typically increases it, though the exact increment depends 
on the graph structure; the algebraic connectivity (second smallest eigenvalue) may increase or decrease 
depending on the attachment vertex \cite{Birolutal2008}. For the normalized Laplacian $\mathcal{L}(G)$, the 
effect of attaching a leaf is more complex and generally non-monotone \cite{Butler2007}.

The Ricci matrix $R_T$ exhibits a richer phenomenology. The present paper reveals an additional layer: 
when leaves are repeatedly added at a fixed vertex, $\lambda_{\max}(R_{T_k})$ converges 
to a limit $\lambda_\infty$ determined by the branches of $T$, and the tail behavior 
is governed by a first-order coefficient $\alpha$ whose sign dictates eventual 
monotonicity. This combination of convergence and sign-dependent monotonicity 
appears to be a distinctive feature of the Ricci matrix, not shared by the adjacency 
or Laplacian matrices.

\section{Notation}\label{sec:notation}
Let $T=(V,E)$ be a finite tree. For a vertex $v$,  let $d_v$ be its degree. We call an edge $ab=\{x,y\}\in E(T)$ \emph{internal} if both endpoints have
degree at least two, i.e., $d_x\ge 2$ and $d_y\ge 2$; equivalently, neither
endpoint is a leaf of $T$.
For an edge $e=\{x,y\}$, the \emph{Lin–Lu–Yau Ricci curvature} of a weighted tree 
with edge weights $w$ is given by \cite{BaiChengHua2026}
\[
\kappa_{xy} = -\left( \frac{S_x-2w_{xy}}{w_{xy}d_x} + \frac{S_y-2w_{xy}}{w_{xy}d_y} \right),
\qquad S_v = \sum_{u\sim v} w_{uv}.
\]

Denote its Ricci matrix by $\RT$, defined as
\[
(\RT)_{e,e'}
=
\begin{cases}
-\left(\dfrac1{d_x}+\dfrac1{d_y}\right), & e=e'=\{x,y\},\\[4pt]
\dfrac1{d_z}, & e\neq e',\ e\cap e'=\{z\},\\[4pt]
0, & e\cap e'=\varnothing.
\end{cases}
\]

A tree is called \emph{discrete Einstein} if there exists a positive weight function
$w$ such that $\kappa_{xy}\equiv\kappa$ is constant. 
In \cite{BaiChengHua2026}, we proved that this is equivalent to $R_T w = \lambda w$
with $\lambda = -\kappa$, and that $\lambda_{\max}(R_T)$ is simple with a positive
eigenvector (the Perron vector). Hence $\kappa = -\lambda_{\max}(R_T)$.

\begin{remark}[Schr\"odinger-operator form of $R_T$]\label{rem:schr}\cite{BaiChengHua2026}
The Ricci matrix can be regarded as a weighted Schr\"odinger operator on the line graph $L(T)$ of $T$:
\[
R_T=\Delta-V,
\]
where $\Delta$ is a weighted graph Laplacian on $L(T)$ and $V$ is a diagonal potential. Explicitly,
\[
\Delta_{e,e'} =
\begin{cases}
\dfrac{1}{d_x} + \dfrac{1}{d_y} - 2, & e = e' = \{x,y\}, \\[10pt]
\dfrac{1}{d_z}, & e \cap e' = \{z\},\ e\neq e', \\[8pt]
0, & \text{otherwise},
\end{cases}
\qquad
V_{e,e'} =
\begin{cases}
\dfrac{2}{d_x} + \dfrac{2}{d_y} - 2, & e = e' = \{x,y\}, \\[8pt]
0, & e \neq e'.
\end{cases}
\]
A direct check shows that $\Delta$ has vanishing row sums, hence is a (negative semidefinite) weighted Laplacian: each diagonal entry equals the negative of the sum of the off-diagonal entries in its row, since an edge $e=\{x,y\}$ has exactly $d_x-1$ neighbours through $x$ (each contributing $1/d_x$) and $d_y-1$ through $y$ (each contributing $1/d_y$), giving row sum $(d_x-1)/d_x+(d_y-1)/d_y+\bigl(1/d_x+1/d_y-2\bigr)=0$. The potential $V$ is the diagonal multiplication operator $e\mapsto \tfrac{2}{d_x}+\tfrac{2}{d_y}-2$. Thus the spectral analysis of $R_T$, and in particular of $\lambda_{\max}(R_T)$, is the analysis of the top of the spectrum of a discrete Schr\"odinger operator $\Delta-V$ on the line graph.
\end{remark}

Its largest eigenvalue satisfies
\[
\lam(\RT)=\max_{\|f\|=1}\langle f,\RT f\rangle.
\]

Fix a vertex $v\in V(T)$. For a function $f:E(T)\to\mathbb R$, define
\[
S_v:=\sum_{e\ni v}f_e,\qquad
A_v:=\sum_{e\ni v}f_e^2,\qquad
\rho_v:=\frac{S_v^2}{A_v}.
\]
If $d_T(v)=d$, then by the Cauchy-Schwarz inequality, we have $1\le \rho_v\le d$.

\section{Monotonicity for One-Step Leaf Addition}\label{sec:one-step}

Before analyzing the general case, consider  starting from the single edge and repeatedly attaching new leaves at the 
one vertex, this forms the star graph $S_n$ with $n\ge 2$ vertices 
($|E| = n-1$ edges). A direct computation shows that the Perron eigenvalue is
$
\lambda_{\max}(R_{S_n}) = -\frac{2}{n-1} = -\frac{2}{|E|}
$, which strictly increase (from $-2$ toward $0$).  

\begin{lemma}\label{lem:mu-gt--1}
For any tree $T$ with at least two edges, $\lambda_{\max}(R_T) \ge  -1$.
\end{lemma}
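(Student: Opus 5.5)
We use the variational characterization $\lambda_{\max}(R_T)=\max_{\|f\|=1}\langle f,R_Tf\rangle$ recalled in Section~\ref{sec:notation}: it suffices to exhibit one nonzero test vector $f$ whose Rayleigh quotient $\langle f,R_Tf\rangle/\|f\|^2$ is at least $-1$. A single edge $e=\{x,y\}$ as test vector gives $-(1/d_x+1/d_y)$. This is $\ge -1$ only if $e$ is internal, and internal edges need not exist (e.g.\ for stars). So we instead use two adjacent edges, which always exist once $T$ has at least two edges, since $T$ is connected.

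Concretely, pick a vertex $z$ with $d_z\ge 2$, and two distinct edges $e=\{z,x\}$ and $e'=\{z,y\}$ at $z$. Let $f=\mathbf 1_e+\mathbf 1_{e'}$, so $\|f\|^2=2$. Only the entries of $R_T$ indexed by $e,e'$ contribute:
\[
\langle f,R_Tf\rangle=(R_T)_{e,e}+(R_T)_{e',e'}+2(R_T)_{e,e'}
=-\Bigl(\tfrac1{d_z}+\tfrac1{d_x}\Bigr)-\Bigl(\tfrac1{d_z}+\tfrac1{d_y}\Bigr)+\tfrac{2}{d_z}
=-\Bigl(\tfrac1{d_x}+\tfrac1{d_y}\Bigr).
\]
The contributions through the shared vertex $z$ cancel exactly. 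Hence the Rayleigh quotient equals $-\tfrac12\bigl(\tfrac1{d_x}+\tfrac1{d_y}\bigr)$. Since $d_x,d_y\ge 1$, this is at least $-1$, which gives $\lambda_{\max}(R_T)\ge -1$.

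There is no real obstacle here. The only point needing care is choosing a test vector for which the diagonal terms at $z$ are cancelled by the off-diagonal coupling $1/d_z$. The equal-weight vector on two edges at a common vertex achieves exactly this, and it makes the bound independent of the structure of $T$. Equality forces $d_x=d_y=1$, i.e.\ both chosen edges are pendant, which is consistent with the star value $-2/|E|=-1$ for the path with two edges.
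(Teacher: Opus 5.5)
Your proof is correct, but it takes a different route from the paper. The paper's proof is a one-line citation: a tree with at least two edges has maximum degree $\mathcal D\ge 2$, and Proposition~4 of \cite{BaiChengHua2026} gives $\lambda_{\max}(R_T)\ge -2/\mathcal D\ge -1$.

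You instead give a self-contained Rayleigh-quotient estimate, using the test vector $\mathbf 1_e+\mathbf 1_{e'}$ on two edges at a vertex $z$ of degree at least two. Your cancellation of the $1/d_z$ terms is right, and the quotient $-\tfrac12(1/d_x+1/d_y)$ is indeed at least $-1$.

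The cited result buys a degree-sensitive bound that tends to $0$ when the tree has a high-degree vertex. It is the natural extension of your idea. Put $f=1$ on all $\mathcal D$ edges at a maximum-degree vertex $z$; then $\langle f,R_Tf\rangle=\mathcal D-2-\sum_{x\sim z}1/d_x\ge -2$ and $\|f\|^2=\mathcal D$. When $d_z=2$, this test vector is exactly yours.

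The paper later uses only the bound $-1$, to get $\mu\ge -1$ in Proposition~\ref{prop:sharp}. So your elementary version loses nothing there, and it removes the dependence on the external reference.
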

\begin{proof}
Any tree $T$ with at least two edges, the maximum degree $\mathcal D\ge 2$. Applying Propostion 4 of \cite{BaiChengHua2026}, 
 $\lambda_{\max}(R_T) \ge -\frac{2}{\mathcal D} \ge -1$.
\end{proof}

Now, we consider any tree $T$ with at least two edges. Let $T'$ be obtained from $T$ by attaching one new pendant edge at vertex $v$, and let $d=d_T(v)$. Let $f: E(T) \to \mathbb{R}$ be an edge function on $T$. When we attach a new pendant edge $e_{\text{new}} = \{v, u\}$ at vertex $v$ (where $u$ is a new vertex) to obtain $T'$, we extend $f$ to a function $f_y$ on $T'$ as follows:
\[
f_y(e) = 
\begin{cases}
f(e), & e \in E(T) \subset E(T'), \\[4pt]
y, & e = e_{\text{new}},
\end{cases}
\]
where $y \in \mathbb{R}$ is a free parameter representing the value assigned to the new edge. Thus $f_y$ coincides with $f$ on all old edges and takes the constant value $y$ on the single new pendant edge. We use the vertex decomposition of the quadratic form (see \cite{ChengHua2026}, Proposition~3.1):
\[
\langle f, R_T f \rangle = \sum_{w \in V(T)} \frac{1}{d_T(w)} \left( S_w(f)^2 - 2A_w(f) \right),
\]
where
\[
S_w(f) = \sum_{e \ni w} f_e, \qquad A_w(f) = \sum_{e \ni w} f_e^2.
\]

When passing from $T$ to $T'$, only two vertices experience changes:
\begin{itemize}
\item Vertex $v$: its degree increases from $d$ to $d+1$;
\item The new leaf $u$: a new vertex of degree $1$ appears.
\end{itemize}
All other vertices retain their degrees and incident edge sets, hence their local contributions remain unchanged.

Finally, we have
\begin{proposition}[Rayleigh Difference Formula for One-Step Addition]\label{prop:delta}\cite{ChengHua2026}
\[
\langle f_y,R_{T'}f_y\rangle-\langle f,R_Tf\rangle
=
-\frac{S^2-2A}{d(d+1)}
+\frac{2Sy}{d+1}
-\frac{d+2}{d+1}y^2.
\]
\end{proposition}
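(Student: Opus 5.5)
We prove this directly from the vertex decomposition of the quadratic form recalled above (\cite{ChengHua2026}, Proposition~3.1), applied once to $T$ and $f$ and once to $T'$ and $f_y$, and then subtract. Throughout, $S=S_v(f)$ and $A=A_v(f)$ are computed in $T$. No spectral information is needed: the statement is an algebraic identity for every $f$ and every $y$.

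First we localise the difference. As noted above, every vertex $w\neq v,u$ has the same degree and the same incident edges in $T$ and $T'$, and $f_y$ agrees with $f$ on those edges. Hence the terms $\frac{1}{d_T(w)}\bigl(S_w^2-2A_w\bigr)$ cancel pairwise. The only surviving terms are:
\begin{itemize}
\item the contribution of $v$ in $T'$ minus its contribution in $T$;
\item the contribution of the new leaf $u$ in $T'$.
\end{itemize}

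Next we compute each surviving term. In $T'$ the vertex $v$ has degree $d+1$, with $S_v(f_y)=S+y$ and $A_v(f_y)=A+y^2$. Its contribution there is
\[
\frac{(S+y)^2-2A-2y^2}{d+1}=\frac{S^2-2A}{d+1}+\frac{2Sy}{d+1}-\frac{y^2}{d+1}.
\]
Subtracting the old contribution $\frac{S^2-2A}{d}$, and using $\frac1{d+1}-\frac1d=-\frac{1}{d(d+1)}$, the change at $v$ is
\[
-\frac{S^2-2A}{d(d+1)}+\frac{2Sy}{d+1}-\frac{y^2}{d+1}.
\]
The leaf $u$ has degree $1$, with $S_u=y$ and $A_u=y^2$, so it contributes $y^2-2y^2=-y^2$.

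Finally we add the two pieces. The $y^2$ terms combine as $-\frac{y^2}{d+1}-y^2=-\frac{d+2}{d+1}y^2$, and the other two terms carry over unchanged. This gives exactly the stated formula.

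There is no real obstacle here. The only point needing care is the claim that all other local contributions are unchanged. This uses that the degree of a vertex $w\neq v,u$ is unaffected by the attachment, and that the new edge is incident only to $v$ and $u$.
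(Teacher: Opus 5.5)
Your proposal is correct and follows the same route as the paper, which also applies the vertex decomposition $\langle f,R_Tf\rangle=\sum_{w}\frac{1}{d_T(w)}\bigl(S_w^2-2A_w\bigr)$ to $T$ and $T'$ and observes that only the contributions of $v$ (degree $d\to d+1$) and of the new leaf $u$ change. You write out the local algebra at $v$ and $u$ that the paper leaves implicit, and it checks out: the change at $v$ is $-\frac{S^2-2A}{d(d+1)}+\frac{2Sy}{d+1}-\frac{y^2}{d+1}$, and $u$ adds $-y^2$.
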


The next proposition gives a sufficient condition for the existence of an extension $f_y$ of a unit vector $f$ on $T$ such that the Rayleigh quotient does not decrease.
\begin{proposition}[$\rho_v$-sharp criterion]\label{prop:sharp}
Let $f$ be a unit vector on $T$ and set $\mu=\langle f,R_Tf\rangle$. If
\[
2\bigl(d+2+\mu(d+1)\bigr)\ge \rho_v\bigl(2+\mu(d+1)\bigr),
\]
then there exists $y\in\mathbb R$ such that
\[
\frac{\langle f_y,R_{T'}f_y\rangle}{\langle f_y,f_y\rangle}\ge \mu.
\]
\end{proposition}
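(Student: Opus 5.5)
We want $y$ such that $\langle f_y,R_{T'}f_y\rangle \ge \mu\langle f_y,f_y\rangle = \mu(1+y^2)$, using $\|f\|=1$ and $\langle f,R_Tf\rangle=\mu$. Write $S=S_v$, $A=A_v$. By Proposition~\ref{prop:delta} the condition is equivalent to
\[
g(y):=-\frac{S^2-2A}{d(d+1)}+\frac{2Sy}{d+1}-\frac{d+2}{d+1}y^2-\mu y^2\ \ge 0 .
\]
Multiplying by $d+1>0$ gives
\[
h(y):=-\frac{S^2-2A}{d}+2Sy-c\,y^2,\qquad c:=d+2+\mu(d+1).
\]
The plan is to maximize the concave quadratic $h$ in $y$ and show that its maximum is nonnegative exactly under the hypothesis.

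First we check that $c>0$. Lemma~\ref{lem:mu-gt--1} together with the variational characterization gives $\mu\ge -1$ for the relevant unit vectors. More carefully, for any unit $f$ we use $\mu\ge$ the minimum Rayleigh quotient. The safest route is to note that if $c\le 0$ then the hypothesis forces $\rho_v(2+\mu(d+1))\le 2c\le 0$. In that case $2+\mu(d+1)<c\le 0$ and we are in a degenerate regime, which we treat separately: if $c<0$ then $h(y)\to+\infty$, so a suitable large $y$ works. If $c=0$, then $h$ is linear. When $S\ne 0$ it is unbounded above. When $S=0$ we have $h=2A/d\ge 0$. So we may assume $c>0$.

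For $c>0$, take $y=S/c$, which gives
\[
\max_y h=\frac{S^2}{c}-\frac{S^2-2A}{d}.
\]
If $A=0$ then $S=0$ and $h(0)=0$, and we are done. Otherwise divide by $A>0$ and use $\rho_v=S^2/A$. Nonnegativity becomes $d\rho_v\ge c(\rho_v-2)$, that is, $2c\ge \rho_v(c-d)$. Since $c-d=2+\mu(d+1)$, this is exactly the stated hypothesis
\[
2\bigl(d+2+\mu(d+1)\bigr)\ge \rho_v\bigl(2+\mu(d+1)\bigr).
\]
Hence $h(S/c)\ge0$, so $g(S/c)\ge 0$, which yields the required Rayleigh quotient inequality.

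The algebra is routine. The main point of care is bookkeeping: the Rayleigh quotient on $T'$ must be compared with $\mu$ using the normalization $\langle f_y,f_y\rangle=1+y^2$, which is what produces the $-\mu y^2$ term. Beyond that, the sign cases for $c$ and $A$ need to be handled so that the division steps are legitimate.
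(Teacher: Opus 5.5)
Your proof is correct and follows the paper's route: you use the Rayleigh difference formula to reduce to finding $y$ with $\Delta(y)-\mu y^2\ge 0$, maximize the quadratic at $y=S/c$, and, after substituting $S^2=\rho_v A$, show that nonnegativity of the maximum is exactly the stated hypothesis. Your opening appeal to Lemma~\ref{lem:mu-gt--1} does not give $\mu\ge -1$ for an arbitrary unit $f$, since that lemma bounds only $\lambda_{\max}$; your separate handling of $c\le 0$ and $A=0$ makes that appeal unnecessary, and is more careful than the paper, which simply asserts concavity from $\mu\ge -1$.
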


\begin{proof}
By Proposition \ref{prop:delta}, letting $\Delta(y):=\langle f_y,R_{T'}f_y\rangle-\langle f,R_Tf\rangle$, we have
\[
\Delta(y)
=
-\frac{S^2-2A}{d(d+1)}
+\frac{2Sy}{d+1}
-\frac{d+2}{d+1}y^2.
\]
Since $\|f\|=1$, $\langle f_y,f_y\rangle=1+y^2$, thus
\[
\frac{\langle f_y,R_{T'}f_y\rangle}{\langle f_y,f_y\rangle}-\mu
=
\frac{\Delta(y)-\mu y^2}{1+y^2}.
\]
Hence it suffices to show there exists $y$ such that $\Delta(y)-\mu y^2\ge0$.

Expanding,
\[
\Delta(y)-\mu y^2
=
-\frac{S^2-2A}{d(d+1)}
+\frac{2Sy}{d+1}
-
\left(\frac{d+2}{d+1}+\mu\right)y^2.
\]
This is a concave quadratic in $y$ as $\mu\ge -1$. Its maximum value is
\[
\max_y\bigl(\Delta(y)-\mu y^2\bigr)
=
\frac{2A}{d(d+1)}
-
\frac{(2+\mu(d+1))S^2}
{d(d+1)\bigl(d+2+\mu(d+1)\bigr)}.
\]
Substituting $S^2=\rho_v A$ gives
\[
\max_y\bigl(\Delta(y)-\mu y^2\bigr)
=
\frac{A}{d(d+1)}
\cdot
\frac{2\bigl(d+2+\mu(d+1)\bigr)-\rho_v\bigl(2+\mu(d+1)\bigr)}
{d+2+\mu(d+1)}.
\]
Under the hypothesis of the proposition, the right-hand side is nonnegative, so there exists $y$ with $\Delta(y)-\mu y^2\ge0$. Consequently,
\[
\frac{\langle f_y,R_{T'}f_y\rangle}{\langle f_y,f_y\rangle}\ge \mu.
\]
\end{proof}

\begin{corollary}[Implication for the Largest Eigenvalue]\label{cor:one-step}
Let $f$ be the unit Perron vector of $R_T$ corresponding to $\lam(R_T)$. If
\[
2\bigl(d+2+\lam(R_T)(d+1)\bigr)
\ge
\rho_v\bigl(2+\lam(R_T)(d+1)\bigr),
\]
then $\lam(R_{T'})\ge \lam(R_T)$.
\end{corollary}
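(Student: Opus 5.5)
The plan is to apply Proposition~\ref{prop:sharp} to the unit Perron vector and then pass from a Rayleigh quotient bound to an eigenvalue bound via the variational characterization.

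First, I take $f$ to be the unit Perron vector of $R_T$, so that $\mu=\langle f,R_Tf\rangle=\lam(R_T)$. The quantity $\rho_v=S_v^2/A_v$ is well defined because $f$ is positive, so $A_v>0$. Since $T$ has at least two edges, Lemma~\ref{lem:mu-gt--1} gives $\mu\ge -1$. This is exactly what the proof of Proposition~\ref{prop:sharp} needs: the quadratic in $y$ is then concave, because
\[
\frac{d+2}{d+1}+\mu\ge \frac{1}{d+1}>0.
\]
The same bound shows that $d+2+\mu(d+1)>0$, so the maximizing $y$ and the resulting formula are legitimate.

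Next, I note that the hypothesis of the corollary is precisely the hypothesis of Proposition~\ref{prop:sharp} with $\mu=\lam(R_T)$. That proposition therefore produces $y\in\mathbb R$ such that the extension $f_y$ on $E(T')$ satisfies
\[
\frac{\langle f_y,R_{T'}f_y\rangle}{\langle f_y,f_y\rangle}\ge \lam(R_T).
\]
Since $f_y\neq 0$ (its restriction to $E(T)$ is the unit vector $f$), the Rayleigh characterization
\[
\lam(R_{T'})=\max_{g\neq0}\frac{\langle g,R_{T'}g\rangle}{\langle g,g\rangle}
\]
applies, because $R_{T'}$ is symmetric. It gives $\lam(R_{T'})\ge \lam(R_T)$.

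There is essentially no obstacle here. The only point that needs care is confirming that the concavity and positivity conditions used in the proof of Proposition~\ref{prop:sharp} hold, and this is where Lemma~\ref{lem:mu-gt--1}, together with the assumption that $T$ has at least two edges, is needed.
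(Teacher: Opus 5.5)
Your proposal is correct and is the same argument as the paper: apply Proposition~\ref{prop:sharp} with $\mu=\lambda_{\max}(R_T)$, then conclude by the Rayleigh principle for the symmetric matrix $R_{T'}$. Your extra checks make explicit what the paper's proof of Proposition~\ref{prop:sharp} uses implicitly: $\mu\ge -1$ via Lemma~\ref{lem:mu-gt--1} gives concavity and $d+2+\mu(d+1)\ge 1>0$, and $A_v>0$ by positivity of the Perron vector. This matters because $\mu\ge -1$ is not automatic for a general unit $f$, so taking $f$ to be the Perron vector is exactly what justifies it.
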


\begin{proof}
Apply Proposition \ref{prop:sharp} with $\mu=\lam(R_T)$. It yields a $y$ such that
\[
\frac{\langle f_y,R_{T'}f_y\rangle}{\langle f_y,f_y\rangle}\ge \lam(R_T).
\]
By the Rayleigh principle,
\[
\lam(R_{T'})
\ge
\frac{\langle f_y,R_{T'}f_y\rangle}{\langle f_y,f_y\rangle}.
\]
Thus $\lam(R_{T'})\ge \lam(R_T)$. \end{proof}

\begin{remark}[Degree-dependent coarse threshold]\label{rem:theta}
Since $1\le \rho_v\le d$, the hardest case to satisfy in Proposition \ref{prop:sharp} is $\rho_v=d$. Substituting this into the sharp criterion
\[
2\bigl(d+2+\mu(d+1)\bigr)\ge \rho_v\bigl(2+\mu(d+1)\bigr)
\]
yields a sufficient condition depending only on $d$:
\[
\mu\le \theta(d),
\]
where
\[
\theta(d)=
\begin{cases}
+\infty, & d=1,2,\\[4pt]
\dfrac{4}{(d+1)(d-2)}, & d\ge3.
\end{cases}
\]
Therefore, if $\lam(R_T)\le \theta(d)$, then attaching a pendant edge at vertex $v$ necessarily satisfies $\lam(R_{T'})\ge \lam(R_T)$. This $\theta(d)$ is kept as it provides a direct degree-based uniform threshold, compressing the local information $\rho_v$.
\end{remark}

\section{Limit of Repeated Leaf Addition at a Fixed Vertex}\label{sec:limit}

Fix a tree $T$ and a vertex $v$ on it. Let $d=d_T(v)$. For each $k\ge0$, let $T_k$ be the tree obtained by attaching $k$ additional pendant edges to $v$, and define $\lambda_k:=\lam(R_{T_k})$.

Let the neighbors of $v$ in the original tree be $u_1,\dots,u_d$. Removing $v$ yields connected components $C_1,\dots,C_d$, with $u_j\in C_j$. Denote the $k$ new pendant edges by $g_1,\dots,g_k$.

\begin{figure}[h]
\centering
\begin{tikzpicture}[
    scale=0.9,
    every node/.style={circle,draw,inner sep=1.5pt},
    >=stealth
]

\node (v) at (0,0) {$v$};

\node (u1) at (-2,1.2) {$u_1$};
\node (c1) at (-3.5,1.8) {$\cdots$};
\draw (v) -- (u1);
\draw (u1) -- (c1);
\node[draw=none] at (-3.5,2.6) {$C_1$};

\node (u2) at (2,1.2) {$u_2$};
\node (c2) at (3.5,1.8) {$\cdots$};
\draw (v) -- (u2);
\draw (u2) -- (c2);
\node[draw=none] at (3.5,2.6) {$C_2$};

\node (ud) at (0,1.5) {$u_d$};
\node (cd) at (0,2.8) {$\cdots$};
\draw (v) -- (ud);
\draw (ud) -- (cd);
\node[draw=none] at (1,2.3) {$C_d$};

\node[draw=none] at (-0.8,0.8) {$\ddots$};
\node[draw=none] at (0.8,0.8) {$\ddots$};

\draw (v) -- (-1.2,-0.8);
\draw (v) -- (-0.4,-1);
\draw (v) -- (0.6,-1);
\draw (v) -- (1.5,-0.5);
\node[draw=none] at (-1.5,-0.8) {$g_1$};
\node[draw=none] at (-0.6,-1.3) {$g_2$};
\node[draw=none] at (0.9,-1.3) {$g_3$};
\node[draw=none] at (2.1,-0.5) {$g_k$};
\node[draw=none] at (2.2,-1) {$\vdots$};


\end{tikzpicture}
\caption{The tree $T_k$: original branches $C_1,\dots,C_d$ attached to $v$ from above, 
with $k$ new pendant edges $g_1,\dots,g_k$ attached below $v$.}
\label{fig:Tk}
\end{figure}
Since any permutation of $g_1,\dots,g_k$ is an automorphism commuting with $R_{T_k}$, and the Perron eigenvector is the unique positive eigenvector, it must take the same value on all these new pendant edges. Hence, to compute the Perron eigenvalue, we only need to retain coordinates for each old edge and combine the new pendant edges into a single common coordinate $y$. On this invariant subspace, the restriction of $R_{T_k}$ is described by a fixed-dimensional representation matrix $Q_k$; thus $Q_k$ and the restricted operator share eigenvalues, and in particular,
\[
\lambda_k=\lambda_{\max}(Q_k),  \ \ \forall k\ge 1.
\]

We now detail the structure of $Q_k$. Set $d_k = d+k$. If an old edge is not incident to $v$, its matrix entries are independent of $k$. For an edge $e_j=\{v,u_j\}$ (the root edge of the $j$-th branch),
\[
(Q_k)_{e_j,e_j}
=
-\left(\frac1{d_T(u_j)}+\frac1{d_k}\right).
\]
Thus, within the same branch, corrections originating from vertex $v$ carry a factor $1/d_k$. For two old edges from different branches, any coupling occurs only through the common vertex $v$, so such blocks also have an overall $1/d_k$ factor. Coupling between a branch and the new leaf orbit distinguishes directions: from the old edge equation, the contribution of the $k$ identical new leaf edges sums, yielding a factor $k/d_k$; from the new leaf equation, the coupling back to a specific old edge remains $1/d_k$. The diagonal entry for the new leaf orbit itself equals $-(d+2)/d_k$.

Ordering basis vectors as (coordinates of old edges in $C_1$, $\dots$, $C_d$, $y$), the matrix $Q_k$ has the block form
\[
Q_k=
\begin{pmatrix}
A_1+\dfrac{1}{d+k}B_{11} & \dfrac{1}{d+k}B_{12} & \cdots & \dfrac{1}{d+k}B_{1d} & c_1+\dfrac{1}{d+k}b_1 \\
\dfrac{1}{d+k}B_{21} & A_2+\dfrac{1}{d+k}B_{22} & \cdots & \dfrac{1}{d+k}B_{2d} & c_2+\dfrac{1}{d+k}b_2 \\
\vdots & \vdots & \ddots & \vdots & \vdots \\
\dfrac{1}{d+k}B_{d1} & \dfrac{1}{d+k}B_{d2} & \cdots & A_d+\dfrac{1}{d+k}B_{dd} & c_d+\dfrac{1}{d+k}b_d \\
\dfrac{1}{d+k}\beta_1^{\mathsf T} & \dfrac{1}{d+k}\beta_2^{\mathsf T} & \cdots & \dfrac{1}{d+k}\beta_d^{\mathsf T} & -\dfrac{d+2}{d+k}
\end{pmatrix},
\]
where $A_j, B_{ij}, b_j, c_j, \beta_j$ are matrices/vectors independent of $k$.

%
%

\medskip
\noindent\emph{The diagonal blocks $A_j$.}
$A_j$ is obtained from the formula for $R_T$ by setting $1/d_v$ to zero everywhere it appears; equivalently, by treating $v$ as having infinite degree, or by imposing a Dirichlet boundary condition at $v$. In particular:
\begin{itemize}[leftmargin=2em]
\item the diagonal entry of $A_j$ at the root edge $e_j=\{v,u_j\}$ is $-1/d_T(u_j)$ (not $-(1/d_T(u_j)+1/d_T(v))$);

\item all entries of $A_j$ that do not involve $v$ at all are unchanged from $R_T$.
\end{itemize}
This is exactly the limit of the corresponding block of $Q_k$ as $k\to\infty$ (since $d_k=d+k\to\infty$, every $1/d_k$ factor disappears).

\medskip
\noindent\emph{The off-diagonal blocks $B_{ij}$.}
For $i\neq j$, the block $B_{ij}$ records the $1/(d+k)$ coupling between branches $i$ and $j$ that occurs only through the shared vertex $v$. Since any such coupling involves both root edges and the factor $1/d_v$, the block $B_{ij}$ is supported only on the $(e_i,e_j)$ entry, and there it equals $1$. Equivalently, $B_{ij}=c_i c_j^{\mathsf T}$ in terms of the column vectors introduced below. For $i=j$, the block $B_{jj}$ records the within-branch $1/(d+k)$ correction at the diagonal of $e_j$; it equals $-c_j c_j^{\mathsf T}$.

\medskip
\noindent\emph{The interface vectors $c_j$, $b_j$, $\beta_j$.}
The vector $c_j$ is the $k$-independent piece of the column coupling branch $j$ to the new-leaf coordinate $y$. The total $(e,y)$-entry of $Q_k$ for an edge $e$ of branch $j$ is
\[
(Q_k)_{e,y}
=
\begin{cases}
\dfrac{k}{d+k}, & e=e_j,\\[6pt]
0, & e\in E(C_j),
\end{cases}
\]
because only the root edge $e_j$ is incident to $v$ and hence to the new-leaf cluster. Writing $k/(d+k)=1-d/(d+k)$, we read off
\[
c_j = \mathbf{1}_{e_j},
\qquad
b_j = -d\,\mathbf{1}_{e_j} = -d\, c_j,
\]
where $\mathbf{1}_{e_j}$ denotes the indicator (standard basis) vector supported on the root edge $e_j$ in the branch-$j$ basis. Thus $c_j$ is \emph{not} an all-ones vector: it has exactly one nonzero entry, located at the root edge.

By symmetry of $R_{T_k}$, the corresponding row coupling $\beta_j^{\mathsf T}$ also acts only through the root edge:
\[
\beta_j = \mathbf{1}_{e_j} = c_j.
\]
The scalar diagonal entry of $B$ at the $(y,y)$ position is
\[
B_{yy} = -(d+2),
\]
which is the $k$-independent part of $-\dfrac{d+2}{d+k}$, the diagonal entry of $Q_k$ 
at the normalized leaf-cluster coordinate $y$.

\medskip
\noindent\emph{Consequence: structure of $Q_\infty$.}
Substituting the descriptions above and letting $k\to\infty$,
\[
Q_\infty
=
\begin{pmatrix}
A_1 &        &       & c_1 \\
    & \ddots &       & \vdots \\
    &        & A_d   & c_d \\
0   & \cdots & 0     & 0
\end{pmatrix},
\]
i.e., $Q_\infty$ is block upper-triangular with diagonal blocks $A_1,\dots,A_d$ and a final scalar block $0$. The off-diagonal blocks between distinct branches all vanish in the limit (they were $O(1/(d+k))$), and the bottom row vanishes entirely (the $(y,e)$ couplings in $Q_k$ are all $O(1/(d+k))$). Consequently,
\[
\sigma(Q_\infty) = \{0\} \cup \bigcup_{j=1}^{d} \sigma(A_j),
\]
where $\sigma(\cdot)$ denotes the spectrum (i.e., the set of eigenvalues, counted with multiplicity).
In particular
\[
\lambda_{\max}(Q_\infty)
=
\max\bigl(0,\ \lambda_{\max}(A_1),\dots,\lambda_{\max}(A_d)\bigr),
\]
which is the content of Proposition~\ref{prop:limit}.

\medskip
\noindent\emph{Verification on the example.}
For the base tree of Section~\ref{sec:example} with $d=d_T(v)=2$, the branch matrices are
\[
A_1=
\begin{pmatrix}
-\tfrac12 & \tfrac12\\[2pt]
\tfrac12 & -\tfrac32
\end{pmatrix},
\qquad
A_2=
\begin{pmatrix}
-\tfrac13 & \tfrac23\\[2pt]
\tfrac13 & -1
\end{pmatrix}.
\]
In each case the upper-left diagonal entry is $-1/d_T(u_j)$ (namely $-1/2$ for $u_1$ and $-1/3$ for $u_2$), \emph{without} an additional $-1/d_T(v)=-1/2$ contribution. 

The interface vectors are
\[
c_1=\beta_1=\begin{pmatrix}1\\0\end{pmatrix},\qquad
c_2=\beta_2=\begin{pmatrix}1\\0\end{pmatrix},\qquad
b_1=\begin{pmatrix}-2\\0\end{pmatrix},\qquad
b_2=\begin{pmatrix}-2\\0\end{pmatrix},
\]
so that $b_j = -d\,c_j$ and $\beta_j = c_j$ hold in this example. One checks immediately that the matrices $Q_\infty$ and $B$ written out in Section~\ref{sec:example} are reproduced by these descriptions.

\begin{proposition}[Limit Formula]\label{prop:limit}
\[
\lambda_k\longrightarrow \lambda_\infty:=\lambda_{\max}(Q_\infty)
=
\max\Bigl(0,\lambda_{\max}(A_1),\dots,\lambda_{\max}(A_d)\Bigr).
\]
\end{proposition}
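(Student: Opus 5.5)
We will prove the limit formula by continuity of eigenvalues. The key point is that $\lambda_k=\lambda_{\max}(Q_k)$ and that $Q_k=Q_\infty+\frac{1}{d+k}B$ with $B$ independent of $k$.

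\textbf{Step 1: convergence of $Q_k$.} From the block description, every entry of $Q_k$ has the form $a+\frac{b}{d+k}$ with $a,b$ independent of $k$. Hence $\|Q_k-Q_\infty\|\le \|B\|/(d+k)\to0$.

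\textbf{Step 2: continuity of eigenvalues.} The eigenvalues of a matrix depend continuously on its entries; this follows from continuity of the roots of the characteristic polynomial, whose coefficients are polynomial in the entries. So the multiset $\sigma(Q_k)$ converges to $\sigma(Q_\infty)$. One caution is that $Q_k$ is not symmetric in this basis (the $(e_j,y)$ entry is $k/(d+k)$, while the $(y,e_j)$ entry is $1/(d+k)$), so its eigenvalues might a priori be complex.

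\textbf{Step 3: $Q_k$ has real spectrum, and $\lambda_k$ is its largest real eigenvalue.} Since $Q_k$ is the restriction of the symmetric $R_{T_k}$ to an invariant subspace, it is similar to a symmetric matrix. Concretely, rescale the $y$ coordinate by $\sqrt{k}$, i.e. use the normalized orbit vector $k^{-1/2}\sum_i \mathbf 1_{g_i}$. The off-diagonal entries then become $\sqrt{k}/(d+k)$ on both sides. Hence all eigenvalues of $Q_k$ are real, and its largest eigenvalue equals $\lambda_k$, since the Perron vector lies in the subspace.

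Once the spectra are known to be real, the roots of the characteristic polynomial of $Q_k$ converge to those of $Q_\infty$ with multiplicity. In particular the maximum converges, because the maximum of a multiset of reals is continuous in the optimal-matching distance. Since $Q_\infty$ is a limit of matrices with real spectrum, its spectrum is also real.

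\textbf{Step 4: identify the limit.} By block upper triangularity, $\sigma(Q_\infty)=\{0\}\cup\bigcup_j\sigma(A_j)$, so $\lambda_{\max}(Q_\infty)=\max(0,\lambda_{\max}(A_1),\dots,\lambda_{\max}(A_d))$. One can also check directly that each $A_j$ is real-diagonalizable: it is the Dirichlet-truncated Ricci block, and the diagonal rescaling $D=\mathrm{diag}(\sqrt{d_z})$ adapted to shared vertices symmetrizes it.

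The main obstacle is Step 3, namely making sure the limit of the largest real eigenvalue is not spoiled by complex eigenvalues or by the non-normal coupling $c_j$ in $Q_\infty$. I handle this through the symmetrization by $\sqrt{k}$ rather than working in the original basis. Note that the symmetrized matrix does not converge to a symmetric version of $Q_\infty$: its off-diagonal entries $\sqrt{k}/(d+k)$ tend to $0$, and it converges to $\mathrm{diag}(\text{symmetrized }A_j,0)$. This has the same spectrum, which gives an alternative route that avoids the non-normal coupling entirely. To make this rigorous I would use Weyl's inequality for symmetric matrices, after first conjugating each $A_j$-block of $R_{T_k}$ restricted to the branch by the same symmetrizer.
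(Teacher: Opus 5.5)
Your argument is correct and is essentially the paper's proof: you write $Q_k=Q_\infty+\frac{1}{d+k}B\to Q_\infty$, use continuity of the top eigenvalue, and then use the block upper-triangular form of $Q_\infty$. Your $\sqrt{k}$-rescaling of the $y$ coordinate also supplies a justification the paper leaves implicit, namely that $Q_k$, and hence $Q_\infty$, has real spectrum, which the paper needs when it invokes continuity of ``the largest eigenvalue'' for a non-symmetric matrix. The aside about symmetrizing $A_j$ by $\mathrm{diag}(\sqrt{d_z})$ is unnecessary: with individual old-edge coordinates, $A_j$ is already a symmetric modified principal submatrix of $R_T$. Your main argument does not depend on that aside.
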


\begin{proof}
From the block representation, we can directly write
\[
Q_k=Q_\infty+\frac{1}{d+k}B,
\]
where
\[
B=
\begin{pmatrix}
B_{11} & B_{12} & \cdots & B_{1d} & b_1 \\
B_{21} & B_{22} & \cdots & B_{2d} & b_2 \\
\vdots & \vdots & \ddots & \vdots & \vdots \\
B_{d1} & B_{d2} & \cdots & B_{dd} & b_d \\
\beta_1^{\mathsf T} & \beta_2^{\mathsf T} & \cdots & \beta_d^{\mathsf T} & -(d+2)
\end{pmatrix}
\]
is independent of $k$. Thus $Q_k\longrightarrow Q_\infty$ as $k\to\infty$. Since the largest eigenvalue is continuous for matrices depending continuously on parameters,
\[
\lambda_k=\lambda_{\max}(Q_k)\longrightarrow \lambda_{\max}(Q_\infty).
\]

Furthermore, $Q_\infty$ is block upper-triangular with diagonal blocks $A_1,\dots,A_d,0$. Hence,
\[
\lambda_{\max}(Q_\infty)
=
\max\Bigl(0,\lambda_{\max}(A_1),\dots,\lambda_{\max}(A_d)\Bigr).
\]
This completes the proof.
\end{proof}

\begin{corollary}
\[
\lambda_\infty=0
\quad\Longleftrightarrow\quad
\lambda_{\max}(A_j)\le0\qquad (j=1,\dots,d).
\]
\end{corollary}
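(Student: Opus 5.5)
The corollary follows directly from Proposition~\ref{prop:limit}, and I will only unwind the definition of the maximum. By Proposition~\ref{prop:limit},
\[
\lambda_\infty=\max\bigl(0,\lambda_{\max}(A_1),\dots,\lambda_{\max}(A_d)\bigr).
\]
A maximum of finitely many real numbers, one of which is $0$, equals $0$ exactly when every other entry is $\le 0$. So the plan is to prove the two directions separately from this elementary fact.

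\emph{Forward direction.} Suppose $\lambda_\infty=0$. Each term in the maximum is at most the maximum itself, so $\lambda_{\max}(A_j)\le\lambda_\infty=0$ for every $j=1,\dots,d$.

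\emph{Reverse direction.} Suppose $\lambda_{\max}(A_j)\le 0$ for all $j$. Then every entry in the maximum is $\le 0$, so $\lambda_\infty\le 0$. On the other hand, $0$ itself appears among the entries, so $\lambda_\infty\ge 0$. Together these give $\lambda_\infty=0$.

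There is no real obstacle in this argument. The one point to keep in mind is where the $0$ in the formula comes from: it is the eigenvalue of the scalar block of $Q_\infty$ attached to the leaf-cluster coordinate $y$. Because of it, $\lambda_\infty\ge 0$ always holds, which is what makes the reverse direction work. Each $A_j$ is a real symmetric-type truncation, so $\lambda_{\max}(A_j)$ is a well-defined real number; this was already used in Proposition~\ref{prop:limit}, and the comparison with $0$ is therefore meaningful.
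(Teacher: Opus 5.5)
Your proof is correct and follows the paper's argument: the paper also presents the corollary as a direct restatement of $\lambda_\infty=\max\bigl(0,\lambda_{\max}(A_1),\dots,\lambda_{\max}(A_d)\bigr)$ from Proposition~\ref{prop:limit}. Your two-direction unwinding, with the scalar leaf-cluster block supplying the entry $0$ and hence $\lambda_\infty\ge 0$, simply spells that restatement out. One minor aside: after the orbit reduction the blocks $A_j$ need not be literally symmetric (e.g.\ $A_2$ in Section~\ref{sec:example}), but they are diagonally symmetrizable with real spectrum, and this plays no role in the argument.
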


\begin{proof}
This is a direct restatement of
\[
\lambda_\infty
=
\max\Bigl(0,\lambda_{\max}(A_1),\dots,\lambda_{\max}(A_d)\Bigr)
\]
from Proposition \ref{prop:limit}.
\end{proof}

\section{Tail Monotonicity}\label{sec:monotonicity}
Using the notation from the previous section,
\[
Q_k=Q_\infty+\frac1{d+k}B.
\]
Set
\[
\varepsilon_k:=\frac1{d+k},
\qquad
Q(\varepsilon):=Q_\infty+\varepsilon B.
\]
Then $Q_k=Q(\varepsilon_k)$ and $\varepsilon_k\to0^+$.

\begin{proposition}[First-Order Asymptotic Expansion]\label{prop:asym}
Assume that $\lambda_\infty=\lambda_{\max}(Q_\infty)$ is a simple eigenvalue of $Q_\infty$. Let $r,\ell$ be corresponding right and left eigenvectors satisfying
\[
Q_\infty r=\lambda_\infty r,\qquad
\ell^{\mathsf T}Q_\infty=\lambda_\infty \ell^{\mathsf T},\qquad
\ell^{\mathsf T}r=1.
\]
Define $\alpha:=\ell^{\mathsf T}Br$. Then as $k\to\infty$,
\[
\lambda_k
=
\lambda_\infty+\frac{\alpha}{d+k}
+O\!\left(\frac1{(d+k)^2}\right).
\]
\end{proposition}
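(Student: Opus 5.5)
The plan is to treat $Q(\varepsilon)=Q_\infty+\varepsilon B$ as an analytic (indeed linear) one-parameter family of real matrices and to apply standard perturbation theory for a simple eigenvalue. Since $\lambda_\infty$ is a simple root of the characteristic polynomial $p(\lambda,\varepsilon)=\det(\lambda I-Q(\varepsilon))$, we have $\partial_\lambda p(\lambda_\infty,0)\neq0$. The implicit function theorem then yields $\delta>0$ and a real-analytic function $\lambda(\varepsilon)$ on $|\varepsilon|<\delta$ with $\lambda(0)=\lambda_\infty$, such that $\lambda(\varepsilon)$ is a simple eigenvalue of $Q(\varepsilon)$. It is real because $p$ has real coefficients and the root is unique near $\lambda_\infty$.

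The first-order coefficient is computed in the usual way. The adjugate $\operatorname{adj}(\lambda I-Q(\varepsilon))$ (or the Riesz projection) gives analytic right eigenvectors $r(\varepsilon)$ with $r(0)=r$. Differentiating $Q(\varepsilon)r(\varepsilon)=\lambda(\varepsilon)r(\varepsilon)$ at $\varepsilon=0$ gives
\[
Br+Q_\infty r'(0)=\lambda'(0)r+\lambda_\infty r'(0).
\]
Multiplying on the left by $\ell^{\mathsf T}$ and using $\ell^{\mathsf T}Q_\infty=\lambda_\infty\ell^{\mathsf T}$ and $\ell^{\mathsf T}r=1$, we obtain $\lambda'(0)=\ell^{\mathsf T}Br=\alpha$. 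Taylor's theorem with a uniform bound on $\lambda''$ on $[0,\delta/2]$ then gives $\lambda(\varepsilon)=\lambda_\infty+\alpha\varepsilon+O(\varepsilon^2)$.

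The remaining, and main, issue is to identify $\lambda(\varepsilon_k)$ with $\lambda_k=\lambda_{\max}(Q_k)$ for all large $k$. I would argue by continuity of the spectrum. Let $\gamma>0$ be the gap between $\lambda_\infty$ and the real parts of the other eigenvalues of $Q_\infty$; this is positive because $\lambda_\infty$ is the largest eigenvalue and is simple. For $\varepsilon$ small, every other eigenvalue of $Q(\varepsilon)$ lies within $\gamma/3$ of $\sigma(Q_\infty)\setminus\{\lambda_\infty\}$, so its real part is at most $\lambda_\infty-2\gamma/3$. Meanwhile $\lambda(\varepsilon)$ is real and within $\gamma/3$ of $\lambda_\infty$. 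Hence $\lambda(\varepsilon)$ is the eigenvalue of $Q(\varepsilon)$ with the largest real part.

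By the reduction in Section~\ref{sec:limit}, $\lambda_k=\lambda_{\max}(R_{T_k})$ is an eigenvalue of $Q_k$. Every eigenvalue of $Q_k$ is an eigenvalue of the symmetric matrix $R_{T_k}$ restricted to an invariant subspace, so all eigenvalues of $Q_k$ are real and at most $\lambda_k$. Therefore $\lambda_k$ is exactly the largest eigenvalue of $Q_k$, and it equals $\lambda(\varepsilon_k)$ once $\varepsilon_k<\delta'$.

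Finally, substituting $\varepsilon_k=1/(d+k)$ gives
\[
\lambda_k=\lambda_\infty+\frac{\alpha}{d+k}+O\!\left(\frac{1}{(d+k)^2}\right).
\]
The subtle points are that $Q_\infty$ is not symmetric, so one must use left and right eigenvectors rather than a Rayleigh quotient, and that one must justify that the perturbed branch is the top eigenvalue. Both are handled by the simplicity and spectral-gap argument above.
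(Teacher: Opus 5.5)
Your proof is correct and follows essentially the same route as the paper. You take the analytic branch $\lambda(\varepsilon)$ of the simple eigenvalue of $Q(\varepsilon)=Q_\infty+\varepsilon B$, differentiate $Q(\varepsilon)r(\varepsilon)=\lambda(\varepsilon)r(\varepsilon)$ at $\varepsilon=0$ and pair with $\ell^{\mathsf T}$ to get $\lambda'(0)=\alpha$, and then identify the branch with $\lambda_{\max}(Q_k)$ for large $k$. The paper simply asserts that identification from simplicity; your spectral-gap argument, together with the observation that $Q_k$ represents $R_{T_k}$ on an invariant subspace and so has real spectrum bounded by $\lambda_k$, actually justifies it.
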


\begin{proof}
Since $\lambda_\infty$ is a simple eigenvalue of $Q_\infty$, analytic perturbation theory guarantees the existence of an analytic eigenvalue branch $\lambda(\varepsilon)$ and analytic eigenvector branches $r(\varepsilon),\ell(\varepsilon)$ in a neighborhood of $\varepsilon=0$ such that
\[
Q(\varepsilon)r(\varepsilon)=\lambda(\varepsilon)r(\varepsilon),
\]
\[
\ell(\varepsilon)^{\mathsf T}Q(\varepsilon)=\lambda(\varepsilon)\ell(\varepsilon)^{\mathsf T},
\]
with normalizations
\[
\ell(\varepsilon)^{\mathsf T}r(\varepsilon)=1,
\qquad
\lambda(0)=\lambda_\infty,\ 
r(0)=r,\ 
\ell(0)=\ell.
\]

Differentiating $Q(\varepsilon)r(\varepsilon)=\lambda(\varepsilon)r(\varepsilon)$ at $\varepsilon=0$ yields
\[
Br+Q_\infty r'(0)=\lambda'(0)r+\lambda_\infty r'(0).
\]
Multiplying on the left by $\ell^{\mathsf T}$ and using $\ell^{\mathsf T}Q_\infty=\lambda_\infty \ell^{\mathsf T}$ and $\ell^{\mathsf T}r=1$ gives $\lambda'(0)=\ell^{\mathsf T}Br=\alpha$. Hence, the Taylor expansion of $\lambda(\varepsilon)$ around $\varepsilon=0$ is
\[
\lambda(\varepsilon)
=
\lambda_\infty+\alpha\varepsilon+O(\varepsilon^2).
\]

Because $\lambda_\infty$ is simple at the limit, for sufficiently small $\varepsilon>0$, this analytic branch corresponds to the largest eigenvalue of $Q(\varepsilon)$. Thus for sufficiently large $k$,
\[
\lambda_k=\lambda_{\max}(Q_k)=\lambda(\varepsilon_k).
\]
Substituting $\varepsilon_k=1/(d+k)$ yields the desired expansion.
\end{proof}

\begin{theorem}[Tail Monotonicity]\label{thm:tail-degenerate}
Under the assumptions of Proposition \ref{prop:asym}, if $\alpha\neq0$, then the sequence $\{\lambda_k\}$ is strictly monotonic for sufficiently large $k$. Specifically:
\begin{enumerate}[label=\rm(\arabic*)]
\item If $\alpha>0$, then for sufficiently large $k$,
\[
\lambda_k>\lambda_\infty,\qquad
\lambda_{k+1}<\lambda_k;
\]
\item If $\alpha<0$, then for sufficiently large $k$,
\[
\lambda_k<\lambda_\infty,\qquad
\lambda_{k+1}>\lambda_k.
\]
\end{enumerate}
\end{theorem}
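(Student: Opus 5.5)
The plan is to work directly from the expansion in Proposition~\ref{prop:asym}. We write it in terms of $\varepsilon_k=1/(d+k)$ as
\[
\lambda_k=\lambda_\infty+\alpha\varepsilon_k+E_k,\qquad |E_k|\le C\varepsilon_k^2\quad (k\ge k_0),
\]
for some constant $C>0$ and index $k_0$. The only work is to check that the $O(\varepsilon_k^2)$ remainder is too small to affect either the sign of $\lambda_k-\lambda_\infty$ or the sign of the consecutive difference $\lambda_{k+1}-\lambda_k$.

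For the first claim, note that $|\alpha|\varepsilon_k>C\varepsilon_k^2$ as soon as $\varepsilon_k<|\alpha|/C$, i.e. for $k$ large. For such $k$, $\lambda_k-\lambda_\infty$ has the sign of $\alpha$. This gives $\lambda_k>\lambda_\infty$ when $\alpha>0$ and $\lambda_k<\lambda_\infty$ when $\alpha<0$.

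For the monotonicity claim, we take differences:
\[
\lambda_{k+1}-\lambda_k=\alpha(\varepsilon_{k+1}-\varepsilon_k)+(E_{k+1}-E_k),\qquad \varepsilon_{k+1}-\varepsilon_k=-\frac{1}{(d+k)(d+k+1)}.
\]
This is the step I expect to be the main obstacle. The main term is of order $\varepsilon_k^2$, the same order as the crude bound $|E_{k+1}-E_k|\le 2C\varepsilon_k^2$, so the bare $O$-estimate does not suffice. To fix this I would use the analyticity of $\lambda(\varepsilon)$ established in the proof of Proposition~\ref{prop:asym}, rather than only its first-order form. Since $\lambda(\varepsilon)$ is analytic near $0$ with $\lambda'(0)=\alpha\neq0$, continuity of $\lambda'$ gives $\delta>0$ such that $\lambda'(\varepsilon)$ has the sign of $\alpha$ on $(0,\delta)$. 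Hence $\lambda$ is strictly monotone on $(0,\delta)$.

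Because $\lambda_k=\lambda(\varepsilon_k)$ for all large $k$ (shown in the proof of Proposition~\ref{prop:asym}), and $\varepsilon_k$ is strictly decreasing with $\varepsilon_k<\delta$ eventually, the mean value theorem gives
\[
\lambda_{k+1}-\lambda_k=\lambda'(\xi_k)(\varepsilon_{k+1}-\varepsilon_k)\quad\text{for some }\xi_k\in(\varepsilon_{k+1},\varepsilon_k).
\]
Since $\varepsilon_{k+1}-\varepsilon_k<0$, the difference $\lambda_{k+1}-\lambda_k$ has the sign of $-\alpha$. So $\lambda_k$ is strictly decreasing if $\alpha>0$ and strictly increasing if $\alpha<0$.

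Equivalently, Taylor's theorem gives the second-order expansion $\lambda(\varepsilon)=\lambda_\infty+\alpha\varepsilon+\tfrac12\lambda''(\eta)\varepsilon^2$, which yields $E_{k+1}-E_k=O(\varepsilon_k^3)$. This is negligible against the main term $-\alpha/((d+k)(d+k+1))$ and recovers the same conclusion. Either way, once the remainder is controlled through the derivative of the analytic branch rather than through its size, both parts (1) and (2) follow.
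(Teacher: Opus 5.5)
Your proposal is correct and follows essentially the same route as the paper. The sign of $\lambda_k-\lambda_\infty$ comes from the first-order expansion. Strict monotonicity comes from analyticity of the branch $\lambda(\varepsilon)$ with $\lambda'(0)=\alpha\neq0$: $\lambda'$ then keeps the sign of $\alpha$ on some interval $(0,\varepsilon_0)$, and since $\varepsilon_{k+1}<\varepsilon_k$, the difference $\lambda_{k+1}-\lambda_k$ has the sign of $-\alpha$. You also correctly note that the bare $O(\varepsilon_k^2)$ remainder cannot control consecutive differences, which is why the paper likewise uses the derivative of the analytic branch rather than the expansion alone.
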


\begin{proof}
From Proposition \ref{prop:asym},
\[
\lambda_k-\lambda_\infty
=
\frac{\alpha}{d+k}
+O\!\left(\frac1{(d+k)^2}\right).
\]
Hence for sufficiently large $k$, $\lambda_k-\lambda_\infty$ has the same sign as $\alpha$, yielding
\[
\alpha>0 \Rightarrow \lambda_k>\lambda_\infty,
\qquad
\alpha<0 \Rightarrow \lambda_k<\lambda_\infty.
\]

Now prove eventual monotonicity. As shown in the proof of Proposition \ref{prop:asym}, $\lambda_k=\lambda(\varepsilon_k)$, where $\lambda(\varepsilon)$ is analytic near $\varepsilon=0$ and $\lambda'(0)=\alpha\neq0$. Thus for sufficiently small $\varepsilon$, $\lambda'(\varepsilon)$ has the same sign as $\alpha$. That is, there exists $\varepsilon_0>0$ such that
\[
0<\varepsilon<\varepsilon_0
\quad\Longrightarrow\quad
\operatorname{sgn}\lambda'(\varepsilon)=\operatorname{sgn}\alpha.
\]
Since $\varepsilon_{k+1}<\varepsilon_k$ and $\varepsilon_k\to0$, for sufficiently large $k$ we have $\varepsilon_k,\varepsilon_{k+1}\in(0,\varepsilon_0)$. Therefore:

If $\alpha>0$, $\lambda(\varepsilon)$ is strictly increasing on $(0,\varepsilon_0)$, and $\varepsilon_{k+1}<\varepsilon_k$, so
\[
\lambda_{k+1}=\lambda(\varepsilon_{k+1})<\lambda(\varepsilon_k)=\lambda_k.
\]

If $\alpha<0$, $\lambda(\varepsilon)$ is strictly decreasing on $(0,\varepsilon_0)$, and $\varepsilon_{k+1}<\varepsilon_k$, so
\[
\lambda_{k+1}=\lambda(\varepsilon_{k+1})>\lambda(\varepsilon_k)=\lambda_k.
\]

Combining both cases yields the conclusion.
\end{proof}

\begin{remark}
Proposition~\ref{prop:asym} assumes $\lambda_\infty$ is simple. If $\lambda_\infty$ is multiple
(e.g., maximal eigenvalues from different branches coincide or coincide with the scalar block $0$),
standard degenerate perturbation theory \cite{Kato1995} gives the first-order asymptotic formula
\[
\lambda_k = \lambda_\infty + \frac{\alpha_{\max}}{d+k} + O\!\left(\frac1{(d+k)^2}\right),
\]
where $\alpha_{\max} = \lambda_{\max}(W)$ and $W$ is the compression of $B$ to the eigenspace of $\lambda_\infty$.
The conclusion regarding tail monotonicity remains unchanged. The detailed derivation for the multiple case can be found in Chapter II  of \cite{Kato1995} and is omitted here for brevity.
\end{remark}

\section{Example}\label{sec:example}

\subsection{A case with $\alpha>0$}
Consider the base tree $T$ as shown: vertex $v$ has a chain of length $2$ to its left and a small binary fork to its right. Pendant edges are repeatedly added at $v$.

\begin{figure}[htp]
    \centering
\begin{tikzpicture}[scale=1, every node/.style={circle,draw,inner sep=1.2pt}]
\node (v) at (0,0) {$v$};
\node (u1) at (-2,0) {$u_1$};
\node (w1) at (-4,0) {$w_1$};
\node (u2) at (2,0) {$u_2$};
\node (z1) at (4,1) {$z_1$};
\node (z2) at (4,-1) {$z_2$};
\node[draw=none] (dots) at (0,2.1) {$\vdots$};
\node (t1) at (0,1.2) {};
\node (t2) at (-0.8,1.2) {};
\node (t3) at (0.8,1.2) {};
\draw (v)--node[above] {$e_1$} (u1);
\draw (u1)--node[above] {$e_2$} (w1);
\draw (v)--node[above] {$e_3$} (u2);
\draw (u2)--node[above right] {$e_4$} (z1);
\draw (u2)--node[below right] {$e_5$} (z2);
\draw (v)--(t1);
\draw (v)--(t2);
\draw (v)--(t3);
\node[draw=none] at (1.5,1.55) {$k$ new leaves};
\end{tikzpicture}
\caption{A schematic illustration of repeated pendant-edge attachment at the fixed vertex \(v\).}
\label{fig:repeated-leaf-attachment}
\end{figure}

Here
\[
d_T(v)=2,\qquad d_T(u_1)=2,\qquad d_T(u_2)=3.
\]
Let the $k$ new pendant edges in $T_k$ share a common coordinate $y$; denote the coordinates of the old edges as
\[
a=f(e_1),\quad b=f(e_2),\quad c=f(e_3),\quad d=f(e_4)=f(e_5).
\]
The Perron eigenvector equation becomes
\[
\lambda
\begin{pmatrix}
a\\ b\\ c\\ d\\ y
\end{pmatrix}
=
Q_k
\begin{pmatrix}
a\\ b\\ c\\ d\\ y
\end{pmatrix},
\]
where
\[
Q_k=
\begin{pmatrix}
-\dfrac12-\dfrac1{k+2} & \dfrac12 & \dfrac1{k+2} & 0 & \dfrac{k}{k+2} \\[8pt]
\dfrac12 & -\dfrac32 & 0 & 0 & 0 \\[8pt]
\dfrac1{k+2} & 0 & -\dfrac13-\dfrac1{k+2} & \dfrac23 & \dfrac{k}{k+2} \\[8pt]
0 & 0 & \dfrac13 & -1 & 0 \\[8pt]
\dfrac1{k+2} & 0 & \dfrac1{k+2} & 0 & -\dfrac4{k+2}
\end{pmatrix}.
\]

This matrix illustrates the structure discussed earlier:
\begin{enumerate}[label=\rm(\arabic*)]
\item The upper-left $2\times2$ block
\[
A_1=
\begin{pmatrix}
-\dfrac12 & \dfrac12\\[4pt]
\dfrac12 & -\dfrac32
\end{pmatrix},
\]
and the next $2\times2$ block
\[
A_2=
\begin{pmatrix}
-\dfrac13 & \dfrac23\\[4pt]
\dfrac13 & -1
\end{pmatrix}
\]
correspond to the two branches after deleting $v$;
\item The entry $k/(k+2) = 1 - 2/(k+2)$ in the fifth column shows the constant main term from the feedback of the whole leaf cluster on the old edge equations;
\item The fifth row remains of order $1/(k+2)$, representing the coupling from a single new leaf equation back to the old edges.
\end{enumerate}

Rewrite as $Q_k=Q_\infty+\frac1{k+2}B$, where
\[
Q_\infty=
\begin{pmatrix}
-\dfrac12 & \dfrac12 & 0 & 0 & 1 \\[6pt]
\dfrac12 & -\dfrac32 & 0 & 0 & 0 \\[6pt]
0 & 0 & -\dfrac13 & \dfrac23 & 1 \\[6pt]
0 & 0 & \dfrac13 & -1 & 0 \\[6pt]
0 & 0 & 0 & 0 & 0
\end{pmatrix},
\qquad
B=
\begin{pmatrix}
-1 & 0 & 1 & 0 & -2 \\[4pt]
0 & 0 & 0 & 0 & 0 \\[4pt]
1 & 0 & -1 & 0 & -2 \\[4pt]
0 & 0 & 0 & 0 & 0 \\[4pt]
1 & 0 & 1 & 0 & -4
\end{pmatrix}.
\]
This matches the affine perturbation form from Propositions \ref{prop:limit} and \ref{prop:asym}.

Now compute the limit blocks. For the two $2\times2$ matrices:
\[
\lambda_{\max}(A_1)=-1+\frac1{\sqrt2}<0,
\qquad
\lambda_{\max}(A_2)=\frac{-2+\sqrt3}{3}<0.
\]
Hence,
\[
\lambda_\infty
=
\max\bigl(0,\lambda_{\max}(A_1),\lambda_{\max}(A_2)\bigr)
=
0,
\]
consistent with Proposition \ref{prop:limit}.

Check the first-order term. For eigenvalue $0$, a right eigenvector is
\[
r=
\begin{pmatrix}
3\\ 1\\ 9\\ 3\\ 1
\end{pmatrix},
\qquad
Q_\infty r=0,
\]
and a corresponding left eigenvector is
\[
\ell^{\mathsf T}=
\begin{pmatrix}
0&0&0&0&1
\end{pmatrix},
\qquad
\ell^{\mathsf T}Q_\infty=0,
\qquad
\ell^{\mathsf T}r=1.
\]
Then
\[
\alpha=\ell^{\mathsf T}Br
=
\begin{pmatrix}
1&0&1&0&-4
\end{pmatrix}
\begin{pmatrix}
3\\ 1\\ 9\\ 3\\ 1
\end{pmatrix}
=8>0.
\]
Thus Proposition \ref{prop:asym} gives
\[
\lambda_k=\frac8{k+2}+O\!\left(\frac1{(k+2)^2}\right),
\]
and the eventual tail monotonicity theorem implies that for sufficiently large $k$,
\[
\lambda_k>0,\qquad \lambda_{k+1}<\lambda_k.
\]

Finally, numerical values for comparison with theory:
\[
\begin{array}{c|ccccccccc}
k & 0 & 1 & 2 & 3 & 5 & 10 & 20 & 50 & 100\\
\hline
\lambda_k
& -0.1731 & -0.0312 & 0.0310 & 0.0628 & 0.0906
& 0.1028 & 0.0922 & 0.0643 & 0.0436
\end{array}
\]
Observations:
\begin{enumerate}[label=\rm(\arabic*)]
\item The early terms are not monotonic, even transitioning from negative to positive values;
\item The limit is indeed $0$;
\item From the middle range onward, the values decrease towards $0$;
\item This illustrates exactly: Proposition \ref{prop:limit} governs the limit, Proposition \ref{prop:asym} and the eventual tail monotonicity theorem govern the tail behavior.
\end{enumerate}

\subsection{A case with $\alpha<0$}

Consider the simplest possible base tree $T$: a single edge $e_1=\{v,u_1\}$ with $u_1$ a leaf. Then $d=d_T(v)=1$, and $T_k$ is a star $K_{1,k+1}$. One computes directly
\[
\lambda_k = \lambda_{\max}(R_{K_{1,k+1}}) = -\frac{2}{k+1}.
\]
Hence $\lambda_\infty = 0$, and expanding
\[
\lambda_k = -\frac{2}{1+k} = \frac{\alpha}{1+k}
\]
gives $\alpha = -2 < 0$. The sequence $\lambda_k$ approaches $0$ from below and is strictly increasing. This matches the prediction of Theorem~\ref{thm:tail-degenerate} with $\alpha<0$.

\medskip
{\bf Acknowledgements:} 
S. Bai is supported by NSFC, no.12301434. B. Hua is supported by NSFC, no.12371056.

\bibliographystyle{plain}  
\bibliography{references}
\end{document}